\numberwithin{equation}{section}
\newtheorem{theorem}{Theorem}[section]
\newtheorem{lemma}[theorem]{Lemma}
\newtheorem{remark}{Remark}
\newcommand\R{{\mathbb R}}
\journal{}
\begin{document}

\begin{frontmatter}



\title{Remark on the regularity criteria for Navier--Stokes equations in terms of one directional derivative of the velocity}

\author[a]{Hui Chen}
\ead{chenhui@zust.edu.cn}
\author[b]{Daoyuan Fang}
\ead{dyf@zju.edu.cn}
\author[b]{Ting Zhang\corref{*}}
\ead{zhangting79@zju.edu.cn}
\address[a]{School of Science, Zhejiang University of Science and Technology, Hangzhou 310023, People's Republic of China }
\address[b]{School of Mathematical Sciences, Zhejiang University,  Hangzhou 310027, People's Republic of China}
\cortext[*]{Corresponding author.}
\begin{abstract}
In this paper, we consider the 3D Navier--Stokes equations in the whole space. We investigate some new inequalities and \textit{a priori} estimates to  provide the critical regularity criteria in terms of one directional derivative of the velocity field, namely $\partial_{3} \bm{u} \in L^{p}((0,T);L^{q}(\R^3)),~\frac{2}{p}+\frac{3}{q}=2,~\frac{3}{2}<q\leq 6$.
Moreover, we extend the range of $q$ while the solution is axisymmetric, i.e. the axisymmetric solution $\bm{u}$ is regular in $(0,T]$, if $
\partial_{3} u^3 \in L^{p}((0,T);L^{q}(\R^3)),~\frac{2}{p}+\frac{3}{q}=2,~\frac{3}{2}<q< \infty.
$
\end{abstract}

\begin{keyword}
Navier-Stokes equations \sep regularity \sep Prodi–Serrin \sep one directional derivative \sep the decomposition of velocity.

\MSC 35K15 \sep 35K55 \sep 35Q35 \sep 76A05
\end{keyword}
\date{}
\end{frontmatter}


\section{Introduction}
In this paper, we consider the Cauchy problem of the 3D Navier--Stokes equations:
\begin{equation}
\left\{
\begin{aligned}\label{NS}
&\partial_{t}\bm{u}+(\bm{u}\cdot\nabla) \bm{u}-\Delta  \bm{u}+\nabla \Pi=0,\ (t,x)\in (0,\infty)\times\R^{3},\\
&\nabla\cdot  \bm{u}=0~,\\
&\bm{u}|_{t=0}=\bm{u}_{0}~.
\end{aligned}
\right.
\end{equation}
The solution $\bm{u}(t,x)=(u^{1},u^{2},u^{3})$, $\Pi(t,x)$ and $\bm{u}_{0}$ denote the fluid velocity field, pressure, and  the given initial data, respectively. These equations describe the flow of incompressible viscous fluid.

For given $\bm{u}_{0}\in L^{2}(\R^{3})$ with $\mathrm{div}~\bm{u}_{0}=0$ in the sense of distribution, a global weak solution $\bm{u}$ to the Navier--Stokes equations was constructed by Leray  \cite{Leray1934} and Hopf \cite{Hopf1951}, which is called Leray--Hopf weak solution. The regularity of such Leray--Hopf weak solution in three dimension plays an important role in the mathematical fluid mechanics. One essential work is usually referred as Prodi--Serrin (P--S) conditions (see \cite{Escauriaza2003,Prodi1959,Serrin1962,Takahashi1990} and the references therein.), i.e., if the weak solution $\bm{u}$  satisfies
\begin{equation}\label{PS-1}
\bm{u}\in L^{p}((0,T);L^{q}(\R^{3})),
\end{equation}
with $\frac{2}{p}+\frac{3}{q}\leq 1$, $3\leq q\leq\infty$, then the weak solution is regular in $(0,T]$.

An analogical result occurs for $\nabla \bm{u}$: the Leray--Hopf  solution $\bm{u}$ is regular, if
\begin{equation}
\nabla \bm{u} \in L^{p}((0,T);L^{q}(\R^3)),~\frac{2}{p}+\frac{3}{q}=2,~\frac{3}{2}\leq q \leq +\infty.
\end{equation}
Since then, many significant regularity criteria (See \cite{Cao2011,Chemin2016,Chemin2017,Fang2013,Han2019,Pineau2020,Qian2016} and the references therein) were established in terms of only partial components of the velocity field, or partial components of gradient of velocity field of the 3D Navier--Stokes equations. For instance, J. Y. Chemin and P. Zhang \cite{Chemin2016,Chemin2017} and B. Han et al. \cite{Han2019} proved the regularity of $\bm{u}$ in $(0,T)$, if
\begin{equation}
\int^T_0\|
u^{3}\|^p_{\dot{H}^{\frac12+\frac2p}}dt<\infty,\ \ \ p\geq 2.
\end{equation}

In this paper, we focus on the regularity criteria in terms of one directional derivative of the whole velocity field.  I. Kukavica and M. Zaine \cite{Kukavica2007} investigated the regularity criteria for the term $\partial_{3}\bm{u}$, which is scaling invariant,

\begin{equation}
\partial_{3} \bm{u} \in L^{p}((0,T);L^{q}(\R^3)),~\frac{2}{p}+\frac{3}{q}=2,~\frac{9}{4} \leq q\leq 3.
\end{equation}
Later on, C. Cao et al.\cite{Cao2010,Zhang2018,Namlyeyeva2020} extended the range of q to $q\in [1.5620,3]$. 

In this paper, we develop \textit{a priori} estimates in \cite{Cao2010,Kukavica2007}, and extend the range of q to $q\in (1.5,6]$, which is optimal on the left side. Now, we state our main theorem.
\begin{theorem}\label{thm}
	Let $\bm{u}$ be the unique solution of the Navier--Stokes equations \eqref{NS} with initial data $\bm{u}_{0}\in H^{1}(\R^{3})$ and $\mathrm{div}~\bm{u}_{0}=0$. The solution $\bm{u}$ is regular in $(0,T]$, provided that
	\begin{equation}\label{0}
	\partial_{3} \bm{u} \in L^{p}((0,T);L^{q}(\R^3)),~\frac{2}{p}+\frac{3}{q}=2,~\frac{3}{2}<q\leq 6.
	\end{equation}
	In addition, the initial data $\bm{u}_{0}$ is axisymmetric , then the solution $\bm{u}$ is regular in $(0,T]$, provided that
	\begin{equation}\label{0.1}
	\partial_{3} u^3 \in L^{p}((0,T);L^{q}(\R^3)),~\frac{2}{p}+\frac{3}{q}=2,~\frac{3}{2}<q< \infty.
	\end{equation}
\end{theorem}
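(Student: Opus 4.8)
The plan is to reduce both statements to an a priori bound on $\|\nabla\bm{u}(t)\|_{L^2}$ over $(0,T)$, after which regularity follows from the standard blow-up criterion for the local $H^1$ strong solution (which exists and is unique by hypothesis since $\bm{u}_0\in H^1$). Testing the momentum equation in \eqref{NS} with $-\Delta\bm{u}$, using $\mathrm{div}\,\bm{u}=0$ to eliminate the pressure via $\int\nabla\Pi\cdot\Delta\bm{u}=0$, and integrating by parts once more (the symmetric term vanishing again by incompressibility), I obtain
\begin{equation}
\frac{1}{2}\frac{d}{dt}\|\nabla\bm{u}\|_{L^2}^2+\|\Delta\bm{u}\|_{L^2}^2=-\int_{\R^3}\partial_k u^i\,\partial_i u^j\,\partial_k u^j\,dx=:\mathcal{N}.
\end{equation}
Thus everything reduces to estimating the cubic form $\mathcal{N}$, which is built from three \emph{first} derivatives of $\bm{u}$, by the dissipation $\|\Delta\bm{u}\|_{L^2}^2$ and a factor integrable in time thanks to \eqref{0}.

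To bring in the hypothesis I would split $\mathcal{N}$ by the directions of the derivatives. Summands in which the index $i$ or $k$ equals $3$ already carry a factor $\partial_3\bm{u}$; the delicate contributions are those with all three derivatives horizontal, $i,k\in\{1,2\}$. For these I would integrate by parts in the horizontal variables and invoke $\mathrm{div}\,\bm{u}=0$ in the form $\partial_3u^3=-(\partial_1u^1+\partial_2u^2)$ to manufacture a factor $\partial_3u^3$, at the cost of exchanged terms in which the velocity itself (not its gradient) appears, to be absorbed through the Sobolev embedding $\dot H^1\hookrightarrow L^6$. Every resulting integral then has the schematic shape $\int|\partial_3\bm{u}|\,|\nabla\bm{u}|^2\,dx$, which I bound by H\"older, $\|\partial_3\bm{u}\|_{L^q}\|\nabla\bm{u}\|_{L^{2q'}}^2$ with $\tfrac1q+\tfrac1{q'}=1$, followed by the interpolation $\|\nabla\bm{u}\|_{L^{2q'}}\le C\|\nabla\bm{u}\|_{L^2}^{1-\theta}\|\Delta\bm{u}\|_{L^2}^{\theta}$, $\theta=\tfrac{3}{2q}$. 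Young's inequality absorbs $\|\Delta\bm{u}\|_{L^2}^2$ precisely when $\theta<1$, i.e. exactly $q>3/2$, leaving $C\,\|\partial_3\bm{u}\|_{L^q}^{1/(1-\theta)}\|\nabla\bm{u}\|_{L^2}^2$ with $1/(1-\theta)=2q/(2q-3)=p$, the exponent fixed by $\tfrac2p+\tfrac3q=2$; Gronwall then closes the bound. The left endpoint $q>3/2$ is thus sharp, while the ceiling $q\le6$ enters through the exchanged terms, whose control via $\dot H^1\hookrightarrow L^6$ is exactly what degrades once $q$ exceeds $6$. Reaching the full range requires the refined, directionally split inequalities announced in the abstract rather than a plain isotropic chain.

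For the axisymmetric case I would pass to cylindrical coordinates $(r,\theta,z)$ with $z=x_3$, so that $u^3=u^z$ and the hypothesis \eqref{0.1} controls $\partial_zu^z=-\tfrac1r\partial_r(ru^r)$, the radial part of the divergence. Because all components are independent of $\theta$, the nonlinear interactions are governed by the meridian half-plane $\{(r,z):r>0\}$ with the measure $r\,dr\,dz$, where the relevant Sobolev inequalities are two-dimensional in nature: $\dot H^1\hookrightarrow L^m$ for every finite $m$, rather than the three-dimensional ceiling at $m=6$. This is what removes the restriction $q\le6$ and allows the same energy scheme—now fed only the single scalar $\partial_3u^3$—to run for all $3/2<q<\infty$. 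Concretely I would recast the cubic term through the axisymmetric identities (expressing the dangerous products via $ru^r$ and the swirl $\Gamma=ru^\theta$) so that $\partial_zu^z$ surfaces as a factor in each piece, and then apply the two-dimensional anisotropic inequalities in $(r,z)$.

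The principal obstacle in both parts is the reorganisation of the cubic nonlinearity so that the controlled quantity appears as a factor in \emph{every} summand: the fully horizontal terms carry no $\partial_3$ a priori, and extracting one through the divergence-free relation forces the delicate anisotropic bookkeeping of horizontal versus vertical derivatives—this is exactly what both pins the admissible range at the sharp left endpoint $q>3/2$ and, in the general case, caps it at $q=6$. In the axisymmetric setting the corresponding difficulty migrates to the axis $r=0$, where the weight $r$ degenerates and the two-dimensional embeddings must be applied with care; handling this degeneracy is the price for opening the range to all finite $q$.
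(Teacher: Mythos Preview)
Your direct $H^1$ energy scheme has a genuine gap in the general case. The claim that the fully horizontal summands of $\mathcal{N}$ can, via integration by parts and $\partial_3u^3=-\nabla_h\!\cdot u^h$, be reduced to integrals of the schematic form $\int|\partial_3\bm{u}|\,|\nabla\bm{u}|^2$ (plus ``exchanged'' terms controlled by $\dot H^1\hookrightarrow L^6$) is exactly the manoeuvre attempted in the earlier literature, and it does \emph{not} yield the full interval $(3/2,6]$: Kukavica--Ziane obtained only $q\in[9/4,3]$ this way, and the subsequent refinements still stopped short of $q=3/2^+$. Indeed, if every piece truly collapsed to $\int|\partial_3\bm{u}|\,|\nabla\bm{u}|^2$, your own H\"older/interpolation chain would close for \emph{all} $q>3/2$ with no upper restriction, so the appearance of the ceiling $q\le 6$ already signals that the reduction is not what you describe. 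You concede as much when you write that ``reaching the full range requires the refined, directionally split inequalities announced in the abstract''---but that is precisely the content of the proof, not an afterthought.

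The paper does not attempt to close $\|\nabla\bm{u}\|_2$ directly. Instead it first runs a \emph{coupled} energy estimate on the auxiliary triple $(\omega^3,\partial_3\bm{u},|u^3|^{\alpha})$ (with $\alpha=2$ for $3/2<q<2$ and $\alpha=3/2$ for $2\le q\le 6$), using the decomposition $u^h=\Delta^{-1}(-\nabla_h\partial_3u^3+\partial_3^2u^h+\nabla_h^{\perp}\omega^3)$ and a new anisotropic interpolation inequality $\|f\|_b\lesssim\|\partial_3f\|_a^{s}\|\nabla^2f\|_2^{1-s}$ to handle the pressure term in the $u^3$ equation. Only after $\|\omega^3\|_2+\|\partial_3\bm{u}\|_2$ is shown bounded does the $\|\nabla\bm{u}\|_2$ estimate close, via $\|\nabla u^h\|_2\lesssim\|\omega^3\|_2+\|\partial_3\bm{u}\|_2$ applied to the horizontal block of $\mathcal{N}$. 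This two-tier structure---bounding an auxiliary energy that is invisible in your scheme---is the missing idea.

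For the axisymmetric part your route is plausible but far more laborious than what the paper actually does: since $\partial_3u^3=-r^{-1}\partial_r(ru^r)$, a one-dimensional Hardy inequality in $r$ gives $\|u^r/r\|_{L^q(\R^3)}\le C(q)\|\partial_3u^3\|_{L^q(\R^3)}$ for every $1<q<\infty$, and regularity then follows immediately from the known criterion of Kubica--Pokorn\'y--Zaj\k{a}czkowski in terms of $u^r/r$. No meridian-plane energy estimates or $r\to 0$ analysis are needed.
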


\begin{remark}
	In the proof of Theorem \ref{thm}, we also prove that the solution $\bm{u}$ is regular in $(0,T]$, provided that
	$\left\|\partial_{3}\bm{u}\right\|_{L^{\infty}((0,T);L^{\frac{3}{2}}(\R^3))}\ll 1 $ is sufficient small.
\end{remark}

\begin{remark}
	Theorem \ref{thm} implies that if the Leray-Hopf weak solution $\bm{u}$  satisfies
	(\ref{0}), then the weak solution is regular in $(0,T]$. Indeed, the Leray-Hopf weak solution $\bm{u}\in L^\infty([0,T];L^2)\cap L^2(0,T;\dot{H}^2)$, then for all $s\in(0,T)$, there exists $t_0\in[0,s]$ such that $\bm{u}(t_0,\cdot)\in H^1$. From Theorem \ref{thm}, one can get the solution $\bm{u}$ is regular in $[t_0,T]$. Due to the arbitrary of $s$, one obtain that the weak solution is regular in $(0,T]$.
\end{remark}

In recent years, it has been realized that the regularity problem for axisymmetric Navier--Stokes equations is essentially a critical one under the standard scaling. Further investigations are well motivated, we refer to \cite{Chen2017,Chen2019,Kubica2012,Zhang2014} for more details.

For the regularity criteria \eqref{0}, we give a brief overview of the proof and explain some main steps.

Step 1. Anisotropic decomposition of the velocity.

Inspired by \cite{Chemin2016,Chemin2017,Han2019}, we adopt a different type of decomposition of the velocity field in \eqref{anisotropic}.
\begin{equation}
u^{h}=\Delta^{-1}\left(-\nabla_{h}\partial_{3}u^{3}+\partial^{2}_{3}u^{h}+\nabla_{h}^{\perp}\omega^{3}\right).
\end{equation}
The two-dimensional vorticity $\omega^3$ and $\partial_{3}\bm{u}$ are regarded as governing unknowns.

Step 2. Estimates of $\omega^3$ and $\partial_{3}\bm{u}$.

If we compute the time derivative of $\|\omega^3\|_{2}^2+\|\partial_{3}\bm{u}\|_{2}^2$, we need to treat the nonlinear terms ( See \eqref{1} and \eqref{K_{2}}) such
as
\begin{equation*}
\int-\partial_{3} u^{2}\partial_{1} u^{3} \omega ^{3}~\mathrm{d}x+\ldots
=\int  \partial_{3} u^{2} u^{3} \partial_{1}\omega ^{3}~\mathrm{d}x+\ldots,
\end{equation*}
since $\nabla u^3$ are bad terms in our analysis. Therefore, we have to deal with \textit{a priori} estimates involving the term $u^3$.

Step 3. Estimates of $u^3$.

This is the main part.  We work with the norm $\|\left(u^3\right)^2\|_{2}^2$ for $\frac{3}{2}<q<2$ and $\|\left(u^3\right)^\frac{3}{2}\|_{3}^2$ for $2 \leq q\leq 6$, which has the same scaling as $\|\omega^3\|_{2}^2$ and $\|\partial_{3}\bm{u}\|_{2}^2 $, respectively. To get a glimpse into this,  we assume $q=\frac{3}{2}$ in \eqref{0}, and $\left\|\partial_{3}\bm{u}\right\|_{L^{\infty}((0,T);L^{\frac{3}{2}}(\R^3))}\ll 1 $ is sufficient small.
When computing the time evolution of $\|\left(u^3\right)^2\|_{2}^2$, we need to estimate the term (See Section \ref{section 3.2} for more details)
\begin{align*}
J_{2}=&2\ \underset{\substack{i=1,2,3\\h=1,2} }{\sum}\ \int \Delta^{-1}\partial_{i}\partial_{h}\left(\partial_{3}u^{i}u^{h}\right)(u^3)^3~\mathrm{d}x.
\end{align*}
We adopt a new inequality \eqref{ingredient} to  control the term $u^h$ by $\|\Delta u^h\|_{2}$. Therefore, it is quite natural to bound the above as
\begin{align*}
J_{2}\leq& C \left\|\partial_{3}\bm{u}\right\|_{\frac{144}{85}}\left\|u^h\right\|_{18}\left\|u^3\right\|_{\frac{144}{17}}^{3}\notag\\
\leq&C\left\|\partial_{3}\bm{u}\right\|_{\frac{144}{85}}^{\frac{8}{5}}\left\|\partial_{3}u^h\right\|_{\frac{3}{2}}^{\frac{4}{9}}\left\|\Delta u^h\right\|_{2}^{\frac{5}{9}}\left\|\nabla_{h}(u^3)^2\right\|_{2}^{\frac{6}{5}}\notag\\
\leq&C \left\|\partial_{3}\bm{u}\right\|_{\frac{3}{2}}^{\frac{9}{5}}\left(\left\|\nabla\omega^3\right\|_{2}^2+\left\|\nabla(u^3)^2\right\|_{2}^2+\left\|\nabla\partial_{3}\bm{u}\right\|_{2}^{2}\right).
\end{align*}
Such estimates turn out to be crucial, and we obtain the uniform control of $E_{1}(t)=\left\|\omega^{3}\right\|_{2}^{2}+\left\|(u^{3})^2\right\|_{2}^{2}+\left\|\partial_{3}\bm{u}\right\|_{2}^{2}$
on the time interval $(0,T)$.

The rest of this paper is organized as follows. In Section 2, we set up some notations and collect a few useful lemmas. In Section 3, we obtain \textit{a priori} estimates of $\omega^3, u^3$ and $\partial_{3}\bm{u}$ for $\frac{3}{2}<q<2$. In Section 4, we obtain \textit{a priori} estimates for $2 \leq q\leq 6$. The final section is devoted to the proof of the main theorem.

\section{Notations and preliminary}
Given two comparable quantities, the inequality $X\lesssim Y$ stands for $X\leq C Y$ for some positive constant $C$. The dependence of the constant $C$ on other parameters or constants are usually clear from the context, and we will often suppress this dependence. Moreover, we denote $L^{p,q}_{T_{1},T_{2}}=L^p((T_{1},T_2);L^q(\R^3))$ and $\|\cdot\|_{r}=\|\cdot\|_{L^{r}(\R^3)}$, for the sake of simplicity.

We shall adopt the following convention for the Fourier transform:
\begin{align*}
&\hat{f}(\xi)=\int_{\mathbb{R}^{n}} f(x) e^{-i x \cdot \xi} \mathrm{d} x,\\
&f(x)=\frac{1}{(2 \pi)^{n}} \int_{\mathbb{R}^{n}} \hat{f}(\xi) e^{i x \cdot \xi} \mathrm{d} \xi.
\end{align*}
For $s\in\R$, the fractional Laplacian $\Lambda^{s}$ then corresponds to the Fourier multiplier $|\xi|^s$ defined as
\begin{equation*}
\widehat{ \Lambda^{s} f}(\xi)=|\xi|^{s} \hat{f}(\xi),
\end{equation*}
whenever it is well-defined. Analogously, we also denote anisotropic fractional Laplacian  $\Lambda_{h}^{s},\Lambda_{v}^{s}$ as
\begin{align*}
\widehat{ \Lambda_{h}^{s} f}(\xi)=|\xi_{h}|^{s} \hat{f}(\xi),\\ \widehat{ \Lambda_{v}^{s} f}(\xi)=|\xi_{3}|^{s} \hat{f}(\xi),
\end{align*}
where $\xi_{h}=(\xi_{1},\xi_{2})$ and $\xi_{3}$ are referred to the horizontal and vertical variables.

A remarkable idea introduced in J. Y. Chemin and P. Zhang \cite{Chemin2016,Chemin2017} and B. Han et al.\cite{Han2019}  is to use the decomposition of the velocity field along with horizontal and vertical directions and use the two-dimensional vorticity $\omega^3$ and $u^3$ as governing unknowns, where $\omega^3=\partial_{1}u^2-\partial_{2}u^1$.

We denote $x_h=(x_1,x_2)$,  $\nabla_{h}=(\partial_{1},\partial_{2}),~\nabla_{h}^{\perp}=(-\partial_{2},\partial_{1})$, and $u^{h}=(u^{1},u^{2})$.
To best illuminate our proof, we introduce a slightly different decomposition of the velocity field.  Notice that
\begin{equation*}
\nabla\times\nabla\times \bm{v}=\nabla\  \mathrm{div}\  \bm{v}-\Delta \bm{v},~\bm{v}=(u^{h},0).
\end{equation*}

Then, by using the Biot-Savart law, we get
\begin{equation}\label{anisotropic}
u^{h}=\Delta^{-1}\left(-\nabla_{h}\partial_{3}u^{3}+\partial^{2}_{3}u^{h}+\nabla_{h}^{\perp}\omega^{3}\right).
\end{equation}
\begin{lemma}\label{lemma2.1}
	For $1<r<\infty$, we have
	\begin{equation}
	\left\|\nabla u^{h}\right\|_{r}\lesssim\left\|\partial_{3}\bm{u}\right\|_{r}+\left\|\omega^{3}\right\|_{r},
	\end{equation}
	\begin{equation}
	\left\|\nabla^{2} u^{h}\right\|_{r}\lesssim\left\|\nabla\partial_{3}\bm{u}\right\|_{r}+\left\|\nabla\omega^{3}\right\|_{r}.
	\end{equation}
\end{lemma}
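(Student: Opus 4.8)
The plan is to read off both estimates directly from the anisotropic representation \eqref{anisotropic} by differentiating it and recognizing every resulting operator as a Fourier multiplier that is smooth away from the origin and homogeneous of degree zero, i.e. a finite composition of Riesz transforms. Such multipliers are bounded on $L^{r}(\R^{3})$ for every $1<r<\infty$ by the Mikhlin--Hörmander multiplier theorem (equivalently, by the Calderón--Zygmund $L^{r}$-theory of Riesz transforms), and this is precisely the range asserted in the lemma.

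For the first inequality I would apply $\nabla$ to \eqref{anisotropic}, writing
\begin{equation*}
\nabla u^{h}=-\Delta^{-1}\nabla\nabla_{h}\left(\partial_{3}u^{3}\right)+\Delta^{-1}\nabla\partial_{3}\left(\partial_{3}u^{h}\right)+\Delta^{-1}\nabla\nabla_{h}^{\perp}\omega^{3}.
\end{equation*}
The first two terms have the form $T_{1}(\partial_{3}u^{3})$ and $T_{2}(\partial_{3}u^{h})$ with $T_{1}=\Delta^{-1}\nabla\nabla_{h}$ and $T_{2}=\Delta^{-1}\nabla\partial_{3}$; their symbols are $\xi\otimes\xi_{h}/|\xi|^{2}$ and $\xi\,\xi_{3}/|\xi|^{2}$, both homogeneous of degree zero. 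The third term applies the degree-zero operator $\Delta^{-1}\nabla\nabla_{h}^{\perp}$ to $\omega^{3}$. Since $\partial_{3}u^{3}$ and $\partial_{3}u^{h}$ are components of $\partial_{3}\bm{u}$, the $L^{r}$-boundedness of these multipliers yields $\|\nabla u^{h}\|_{r}\lesssim\|\partial_{3}\bm{u}\|_{r}+\|\omega^{3}\|_{r}$.

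For the second inequality I would apply a generic second-order derivative $\partial_{a}\partial_{b}$ to \eqref{anisotropic} and, in each term, peel off a degree-zero multiplier while leaving exactly one derivative acting on the governing unknowns. Concretely, I would regroup
\begin{equation*}
\Delta^{-1}\partial_{a}\partial_{b}\nabla_{h}\left(\partial_{3}u^{3}\right)=\left(\Delta^{-1}\partial_{a}\partial_{h}\right)\partial_{b}\left(\partial_{3}u^{3}\right),
\end{equation*}
and similarly $\Delta^{-1}\partial_{a}\partial_{b}\partial_{3}^{2}u^{h}=(\Delta^{-1}\partial_{a}\partial_{3})\,\partial_{b}(\partial_{3}u^{h})$ and $\Delta^{-1}\partial_{a}\partial_{b}\nabla_{h}^{\perp}\omega^{3}=(\Delta^{-1}\partial_{a}\nabla_{h}^{\perp})\,\partial_{b}\omega^{3}$, so that each is a bounded operator acting on a component of $\nabla\partial_{3}\bm{u}$ or $\nabla\omega^{3}$. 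Summing over $a,b$ then gives $\|\nabla^{2}u^{h}\|_{r}\lesssim\|\nabla\partial_{3}\bm{u}\|_{r}+\|\nabla\omega^{3}\|_{r}$.

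The whole argument is essentially bookkeeping in the frequency variable, and the only point that requires care is this regrouping: after two of the derivatives are used to cancel the factor $\Delta^{-1}$ and form a degree-zero symbol, the residual derivatives must land on $\partial_{3}\bm{u}$ (respectively $\nabla\partial_{3}\bm{u}$) and $\omega^{3}$ (respectively $\nabla\omega^{3}$), rather than on $u^{3}$ alone, which carries no control on the right-hand side. Once the operators are arranged in this form, their $L^{r}$-boundedness is immediate, and the restriction $1<r<\infty$ is exactly the range in which the Riesz transforms are $L^{r}$-bounded, so it is sharp for this method.
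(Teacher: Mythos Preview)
Your argument is correct. The paper does not actually supply a proof of Lemma~\ref{lemma2.1}; it simply states the decomposition \eqref{anisotropic} and then the lemma, leaving the verification implicit. Your derivation---differentiating \eqref{anisotropic}, grouping two of the derivatives with $\Delta^{-1}$ to form a degree-zero Fourier multiplier, and invoking the $L^{r}$-boundedness of Riesz transforms for $1<r<\infty$---is exactly the intended route and fills in what the authors omitted.
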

A key ingredient is introduced below.
\begin{lemma}\label{lemma2.2}
	For $f\in H^{2}(\R^3)$, we have
	\begin{equation}\label{ingredient}
	\left\|f\right\|_{b} \lesssim \left\|\partial_{3}f\right\|_{a}^{s}\cdot\left\|\nabla^2 f\right\|_{2}^{1-s},
	\end{equation}
	with $\frac{1}{2}-\frac{1}{b}=s$, $3(\frac{1}{a}-\frac{1}{2})=\frac{2}{s}-4$, $2<b<\infty,1\leq a<2$.
\end{lemma}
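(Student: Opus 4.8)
The plan is to deduce \eqref{ingredient} from a single \emph{anisotropic} Sobolev inequality in which the horizontal and vertical variables are differentiated to \emph{different} orders, followed by an interpolation of the vertical Lebesgue exponent; scale invariance will then fix every constant. Write $x=(x_h,x_3)$ with $x_h=(x_1,x_2)$. I would first observe that the two stated relations $s=\frac12-\frac1b$ and $3(\frac1a-\frac12)=\frac2s-4$ are precisely the conditions making the right-hand side of \eqref{ingredient} invariant under the isotropic rescaling $f\mapsto f(\lambda\,\cdot)$. Consequently it suffices to produce \emph{some} inequality of the form $\|f\|_b\lesssim\|\partial_3f\|_a^{s}\|\nabla^2f\|_2^{1-s}$: the exponent $b$ is then automatically the correct one, since homogeneity forces $-\frac3b=s(1-\frac3a)+(1-s)\frac12$, which is exactly $\frac1b=\frac12-s$.

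The cornerstone I would establish is the estimate
\begin{equation*}
\|f\|_{b_c}\lesssim\left\|\nabla_{h}^{2} f\right\|_{2}^{\frac12}\left\|\partial_3 f\right\|_{c}^{\frac12},\qquad \frac1{b_c}=\frac1{2c}-\frac14,\quad 1\le c<2,
\end{equation*}
which carries two horizontal derivatives in $L^2$ and one vertical derivative in $L^c$; the weights $\left(\frac14,\frac14,\frac12\right)$ are dictated by \emph{anisotropic} scaling. I would prove it by freezing $x_3$, applying a two-dimensional second-order Gagliardo–Nirenberg inequality in $x_h$ to control $\|f(\cdot,x_3)\|_{L^{b_c}_{x_h}}$ by $\|\nabla_{h}^{2} f(\cdot,x_3)\|_{L^2_{x_h}}$ and a lower horizontal norm, then recombining the slices through Minkowski's and Hölder's inequalities together with a one-dimensional Gagliardo–Nirenberg estimate in $x_3$ that trades $\partial_3 f\in L^c$ for vertical integrability, the exponents being chosen so that the scaling-critical combination closes. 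Equivalently, this is the anisotropic Sobolev embedding with mixed differentiation orders, which can also be obtained from the classical first-order anisotropic (Troisi) inequality applied to $\nabla_{h} f$.

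Granting the cornerstone, the lemma follows by interpolating the vertical integrability. For $a\le c\le 6$ I write $\|\partial_3 f\|_c\lesssim\|\partial_3 f\|_a^{1-\tau}\|\partial_3 f\|_6^{\tau}$ and bound $\|\partial_3 f\|_6\lesssim\|\nabla\partial_3 f\|_2\le\|\nabla^2 f\|_2$ via the Sobolev embedding $\dot{H}^{1}(\R^3)\hookrightarrow L^6$, while $\|\nabla_{h}^{2} f\|_2\le\|\nabla^2 f\|_2$ trivially. Substituting into the cornerstone gives $\|f\|_{b_c}\lesssim\|\partial_3 f\|_a^{(1-\tau)/2}\|\nabla^2 f\|_2^{(1+\tau)/2}$, i.e. the desired inequality with $s=\frac{1-\tau}2$. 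Choosing $\tau=1-2s$ realizes every $s\in\left(0,\frac12\right)$, and a short computation shows the intermediate exponent satisfies $c<2$ precisely when $a<2$ (equivalently $s>\frac{a}{6-a}$), so the cornerstone is applicable on the whole stated range; scale invariance then forces $b_c=b$.

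The main obstacle is the cornerstone inequality itself, because its unequal differentiation orders (second order horizontally, first order vertically) lie outside the classical first-order Gagliardo–Nirenberg–Sobolev framework. Controlling the vertical behaviour requires generating a genuine norm of $f$ out of $\partial_3 f$, and it is the scaling-critical matching of the horizontal and vertical exponents — so that no stray lower-order norm of $f$ survives — that is delicate. A viable alternative is an anisotropic Littlewood–Paley decomposition using separate horizontal and vertical Bernstein inequalities; there the difficulty migrates to the bi-dyadic summation, which is subtle precisely because the vertical exponent $c\neq2$ is non-Hilbertian.
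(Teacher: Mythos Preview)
Your reduction is correct: under the constraint $3(\tfrac1a-\tfrac12)=\tfrac2s-4$ one computes $\tfrac1c=\tfrac32-2s$, so indeed $a\le c<2$ throughout the stated range, $\tau=1-2s\in[0,1)$, and $b_c=b$ exactly. Granting the cornerstone, the interpolation step closes.

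The route, however, is quite different from the paper's and shifts all the difficulty into the cornerstone, whose proof you only sketch. The paper argues directly on the Fourier side in a few lines: from the anisotropic embedding $\|f\|_b\lesssim\|\Lambda_v^{s}\Lambda_h^{2s}f\|_2=\bigl\|\,|\xi_3|^{s}|\xi_h|^{2s}\hat f\,\bigr\|_2$, one multiplies and divides by the \emph{isotropic} weight $|\xi|^{\gamma}$ with $\gamma=2-4s$, applies H\"older in $L^2_\xi$ with exponents $(\tfrac1s,\tfrac1{1-s})$, and obtains
\[
\|f\|_b\;\lesssim\;\bigl\|\,|\xi|^{-\gamma/s}\xi_3\hat f\,\bigr\|_2^{\,s}\,\bigl\|\,|\xi|^{2}\hat f\,\bigr\|_2^{\,1-s}
\;=\;\bigl\|\Lambda^{-(2/s-4)}\partial_3 f\bigr\|_2^{\,s}\,\|\nabla^2 f\|_2^{\,1-s}
\;\lesssim\;\|\partial_3 f\|_a^{\,s}\,\|\nabla^2 f\|_2^{\,1-s},
\]
the last step being Hardy--Littlewood--Sobolev, which is precisely where the relation between $a$ and $s$ enters. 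No auxiliary endpoint inequality is needed, and the ``no stray lower-order norm'' issue you flag never arises because the single weight $|\xi|^\gamma$ converts the anisotropic symbol into two purely isotropic pieces at once.

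Your cornerstone is essentially the excluded endpoint $s=\tfrac12$ of the lemma (with $\nabla^2$ sharpened to $\nabla_h^2$); it is true, but the physical-space scheme you outline (2D second-order Gagliardo--Nirenberg at fixed $x_3$, then 1D in $x_3$) does not close as written---a residual low norm of $f$ survives, exactly as you anticipate. A Fourier argument \emph{does} prove the cornerstone (insert $|\xi_h|^{\pm(1-2s_c)}$ instead of $|\xi|^{\pm\gamma}$ and use an anisotropic HLS), but at that point you are essentially reproducing the paper's method with an extra detour through interpolation.
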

\begin{proof}
	Without loss of generality, we assume $f\in C_{0}^{\infty}(\R^3)$.
	
	Set $\gamma=2-4s$. By Sobolev embedding and H\"{o}lder inequality, we obtain
	\begin{align*}
	\left\|f\right\|_{b}&\lesssim \left\|\Lambda_{v}^{s}\Lambda_{h}^{2s} f\right\|_{2}\\
	&\lesssim \left\| |\xi_{3}|^{s} |\xi_{h}|^{2s}\ \hat{f}\right\|_{2}\\
	&\lesssim \left\| |\xi|^{-\gamma} |\xi_{3}|^{s}\cdot   |\xi|^{\gamma} |\xi_{h}|^{2s}\ \hat{f}\right\|_{2} \\
	&\lesssim \left\| |\xi|^{-\frac{\gamma}{s}}\cdot \xi_{3}\ \hat{f}\right\|_{2}^{s}\  \left\| |\xi|^{2}\ \hat{f}\right\|_{2}^{1-s}\\
	&\lesssim \left\|\partial_{3}f\right\|_{a}^{s} \left\|\nabla^{2} f\right\|_{2}^{1-s}.
	\end{align*}
\end{proof}
We recall the following three-dimensional Sobolev--Ladyzhenskaya inequalities (see e.g. \cite{Cao2010,Zhang2018}).
\begin{lemma}\label{lemma2.4}\label{lemmma2.3}
	For $1\leq q <\infty$, there exists a constant $C$ such that for $f\in C_{0}^{\infty}(\R^3)$,
	\begin{equation}
	\left\|f\right\|_{3q} \leq C\left\|\partial_{3} f\right\|_{q}^{\frac{1}{3}} \left\|\nabla_{h}f\right\|_{2}^{\frac{2}{3}},
	\end{equation}
	\begin{equation}
	\left\|f\right\|_{5q} \leq C\left\|\partial_{3} f\right\|_{q}^{\frac{1}{5}} \left\|\nabla_{h}\left(|f|^{2}\right)\right\|_{2}^{\frac{2}{5}}.
	\end{equation}
\end{lemma}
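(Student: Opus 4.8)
The plan is to treat both estimates as anisotropic Sobolev--Gagliardo--Nirenberg inequalities: I would prove the first one directly and then deduce the second from it by applying the first to $|f|^2$. One should note at the outset that, in contrast with Lemma \ref{lemma2.2}, the Fourier--multiplier scheme used there does \emph{not} close here, because the horizontal data is only a first-order quantity $\nabla_h f$ in $L^2$ (and not $\nabla^2 f$). Indeed, the natural frequency splitting leaves a residual multiplier $(|\xi_3|/|\xi|)^{2\alpha-2/3}(|\xi_h|/|\xi|)^{2\beta-4/3}$ which cannot be kept $\le 1$ simultaneously with the Hardy--Littlewood--Sobolev admissibility condition except at $q=2$, so a real-variable argument is needed.

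For the first inequality I would use the Gagliardo--Loomis--Whitney slicing method. The estimate is exactly the anisotropic Sobolev inequality
\begin{align*}
\|f\|_{3q}\lesssim\|\partial_1 f\|_2^{1/3}\,\|\partial_2 f\|_2^{1/3}\,\|\partial_3 f\|_q^{1/3},
\end{align*}
after which $\|\partial_1 f\|_2^{1/3}\|\partial_2 f\|_2^{1/3}\le\|\nabla_h f\|_2^{2/3}$ gives the claim. To prove the displayed inequality I start, in each coordinate direction $i$, from the one-dimensional bound $|f(x)|^{\kappa_i}\le\kappa_i\int_{\R}|f|^{\kappa_i-1}|\partial_i f|\,dx_i$ and apply H\"older in $x_i$ with exponent $p_i$ (with $p_1=p_2=2$, $p_3=q$) to introduce $\|\partial_i f\|_{L^{p_i}}$; multiplying the three bounds and integrating over $\R^3$ via the Loomis--Whitney inequality separates the three one-dimensional integrals. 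The powers $\kappa_i$ must be chosen so that the leftover factors of $|f|$ recombine into a single power of $\|f\|_{3q}$, which is then absorbed on the left-hand side. This self-consistent choice is precisely what forces the target exponent $3q$.

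Granting the first inequality, the second follows with no new ingredient. Apply it to $g=|f|^2$ at the exponent $\tilde q=\tfrac{5q}{6}$ (so $3\tilde q=\tfrac{5q}2$; admissible for $q\ge\tfrac65$, which covers the relevant range):
\begin{align*}
\|g\|_{5q/2}\lesssim\|\partial_3 g\|_{5q/6}^{1/3}\,\|\nabla_h g\|_2^{2/3}.
\end{align*}
Here $\|g\|_{5q/2}=\|f\|_{5q}^2$ and $\nabla_h g=\nabla_h(|f|^2)$, while $\partial_3 g=2f\,\partial_3 f$ and H\"older (using $\tfrac{6}{5q}=\tfrac1{5q}+\tfrac1q$) gives $\|\partial_3 g\|_{5q/6}\le 2\|f\|_{5q}\|\partial_3 f\|_q$. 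Hence $\|f\|_{5q}^2\lesssim\|f\|_{5q}^{1/3}\|\partial_3 f\|_q^{1/3}\|\nabla_h(|f|^2)\|_2^{2/3}$, and dividing by $\|f\|_{5q}^{1/3}$ (legitimate after a truncation ensuring finiteness) and raising to the power $\tfrac35$ yields $\|f\|_{5q}\lesssim\|\partial_3 f\|_q^{1/5}\|\nabla_h(|f|^2)\|_2^{2/5}$, as required.

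The main obstacle is the first inequality, and within it the exponent bookkeeping in the slicing argument: one must track the three different Lebesgue exponents of $|f|$ produced by the three directional H\"older steps and tune the powers $\kappa_i$ so that they collapse to the single norm $\|f\|_{3q}$ available for absorption (a naive single-power choice only closes at $q=2$). Once this is arranged, bounding the horizontal derivatives by $\|\nabla_h f\|_2$ and the deduction of the second inequality are routine.
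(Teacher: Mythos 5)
The paper itself offers no proof of this lemma: it is recalled from \cite{Cao2010,Zhang2018}. So your proposal should be judged as a reconstruction of the literature argument, and on that score it is essentially correct. Your first inequality is exactly Troisi's anisotropic Sobolev inequality with exponents $(2,2,q)$, and the slicing scheme does close with unequal powers, just as you anticipate: take $\kappa_1=\kappa_2=\tfrac{3q}{2}+1$ and $\kappa_3=3q-2$, use $\sup_{x_i}|f|^{\kappa_i}\le\kappa_i\int_{\R}|f|^{\kappa_i-1}|\partial_i f|\,dx_i$, then the \emph{symmetric} Loomis--Whitney inequality with exponents $\tfrac12$ (no weighted version is needed, since the asymmetry is carried entirely by the $\kappa_i$), and H\"older with the conjugate pairs $(p_i,p_i')$, $p_1=p_2=2$, $p_3=q$, applied on all of $\R^3$. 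Since $(\kappa_i-1)p_i'=3q$ for each $i$ and $\kappa_1+\kappa_2+\kappa_3=6q$, this yields
\begin{equation*}
\|f\|_{3q}^{3q}\lesssim \|f\|_{3q}^{3q-\frac32}\,\|\partial_1 f\|_2^{\frac12}\,\|\partial_2 f\|_2^{\frac12}\,\|\partial_3 f\|_q^{\frac12},
\end{equation*}
and absorption gives the claim for all $1\le q<\infty$ (at $q=1$ take $\kappa_3=1$, $p_3'=\infty$). Your diagnosis that a single common power closes only at $q=2$ is also correct. One caution on execution: the H\"older step must use the same conjugate pair in $x_i$ and in $\hat x_i$ (equivalently, apply it globally after reducing Loomis--Whitney to $L^1$ norms of the slice integrals); H\"older in $x_i$ alone followed by $L^2$ norms over $\hat x_i$ leaves genuinely mixed norms that do not collapse to $\|f\|_{3q}$ and $\|\partial_i f\|_{p_i}$.

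The one genuine shortfall is in your second inequality: the reduction through $g=|f|^2$ requires $\tilde q=\tfrac{5q}{6}\ge1$, i.e.\ $q\ge\tfrac65$, whereas the lemma asserts $1\le q<\infty$. You flag this, and it is harmless for the paper, which only invokes the second bound with exponents $a>\tfrac65$ (e.g.\ $a=\tfrac{144}{85}$ near $q=\tfrac32$ in \eqref{J_{2}1}); but to prove the lemma as stated you should run the slicing argument directly: in the horizontal directions use $|f|^{\kappa_h}\le C\int_{\R}|f|^{\kappa_h-2}\,\bigl|\partial_h\bigl(|f|^2\bigr)\bigr|\,dx_h$ with $\kappa_1=\kappa_2=\tfrac{5q}{2}+2$, and in the vertical direction $\kappa_3=5q-4$. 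The same bookkeeping ($\kappa_1+\kappa_2+\kappa_3=10q$, $2(\kappa_h-2)=5q$, $(\kappa_3-1)q'=5q$) closes for every $q\ge1$ and gives $\|f\|_{5q}\lesssim\|\partial_3 f\|_q^{1/5}\,\|\nabla_h\bigl(|f|^2\bigr)\|_2^{2/5}$ without the detour and without the range restriction.
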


\section{Some \textit{a priori} estimates for $\frac{3}{2}<q<2$}
\subsection{Estimate of $\omega^3$}
Recall that $\omega^3$ satisfies the equation
\begin{equation}\label{omega^3}
\partial_{t} \omega^3+(\bm{u} \cdot \nabla) \omega^3-\Delta \omega^3= - \partial_{3} u^{2}\partial_{1} u^{3}+\partial_{3} u^{1}\partial_{2} u^{3} +\omega ^{3}\partial_{3} u^{3}.
\end{equation}
Taking $L^2$ inner product of equation \eqref{omega^3}  with $\omega^3$ , one has
\begin{equation}\label{1}
\begin{split}
\frac{1}{2}\frac{\mathrm{d}}{\mathrm{d}t}\left\|\omega^3\right\|_{2}^{2}+\left\|\nabla \omega^3\right\|_{2}^{2}&=\int-\partial_{3} u^{2}\partial_{1} u^{3} \omega ^{3}+\partial_{3} u^{1}\partial_{2} u^{3} \omega ^{3}+ \omega^{3} \partial_{3} u^{3} \omega ^{3}~\mathrm{d}x \\
&=\int  \partial_{3} u^{2} u^{3} \partial_{1}\omega ^{3}-\partial_{3} u^{1} u^{3} \partial_{2}\omega ^{3}+\frac{1}{2}\partial_{3}u^{3} \left(\omega^{3}\right)^2~\mathrm{d}x \\
&=:\ I_{1}+I_{2}+I_{3}.
\end{split}
\end{equation}
According to H\"{o}lder, interpolation and Cauchy--Schwarz inequalities,
we have
\begin{equation}\label{I_{1,2}}
\begin{split}
I_{1}+I_{2}&\leq 2 \left\|\partial_{3}u^{h}\right\|_{\frac{12q}{q+6}} \left\|u^{3}\right\|_{\frac{12q}{5q-6}}\left\|\nabla \omega^{3}\right\|_{2} \\
&\leq C \left\|\partial_{3}u^{h}\right\|_{q}^{\frac{1}{2}}\left\|\nabla\partial_{3}u^{h}\right\|_{2}^{\frac{1}{2}}\left\|(u^{3})^2\right\|_{2}^{1-\frac{3}{2q}}\left\|\nabla(u^{3})^2\right\|_{2}^{\frac{3}{2q}-\frac{1}{2}}\left\|\nabla \omega^{3}\right\|_{2} \\
&\leq C\left\|\partial_{3}\bm{u}\right\|_{q}^{p}\left\|(u^{3})^2\right\|_{2}^{2}+\frac{1}{16}\left(\left\|\nabla \omega^{3}\right\|_{2}^{2}+\left\|\nabla(u^{3})^2\right\|_{2}^{2}+\left\|\nabla\partial_{3}\bm{u}\right\|_{2}^{2}\right),
\end{split}
\end{equation}
and
\begin{equation}\label{I_{3}}
\begin{split}
I_{3}&\leq \frac{1}{2}\left\|\partial_{3} u^{3}\right\|_{q}\left\|\omega^{3}\right\|_{\frac{2q}{q-1}}^{2}\\
&\leq C \left\|\partial_{3} u^{3}\right\|_{q}\left\|\omega^{3}\right\|_{2}^{2-\frac{3}{q}}\left\|\nabla\omega^{3}\right\|_{2}^{\frac{3}{q}} \\
&\leq C \left\|\partial_{3} \bm{u}\right\|_{q}^{p}\left\|\omega^{3}\right\|_{2}^{2}+\frac{1}{16}\left\|\nabla\omega^{3}\right\|_{2}^{2}.
\end{split}
\end{equation}
Summing up \eqref{1}, \eqref{I_{1,2}} and \eqref{I_{3}}, we get
\begin{equation}\label{I}
\begin{split}
\frac{1}{2}\frac{\mathrm{d}}{\mathrm{d}t}\left\|\omega^3\right\|_{2}^{2}+\left\|\nabla \omega^3\right\|_{2}^{2}&\leq C \left\|\partial_{3} \bm{u}\right\|_{q}^{p}\left(\left\|\omega^{3}\right\|_{2}^{2}+\left\|(u^{3})^2\right\|_{2}^{2}\right)\\
&+\frac{1}{8}\left(\left\|\nabla \omega^{3}\right\|_{2}^{2}+\left\|\nabla(u^{3})^2\right\|_{2}^{2}+\left\|\nabla\partial_{3}\bm{u}\right\|_{2}^{2}\right).
\end{split}
\end{equation}

\subsection{Estimate of $u^3$} \label{section 3.2}
The equation of $u^3$ is
\begin{equation}\label{u^3}
\partial_{t}u^3+(\bm{u}\cdot\nabla) u^3-\Delta  u^3+\partial_{3} \Pi=0.
\end{equation}

Taking inner product of the equation \eqref{u^3} with $(u^3)^3$, we obtain
\begin{equation}\label{2}
\begin{split}
\frac{1}{4}\frac{\mathrm{d}}{\mathrm{d}t}\left\|(u^3)^2\right\|_{2}^{2}+\frac{3}{4}\left\|\nabla (u^3)^2\right\|_{2}^{2}=&-\int\partial_{3}\Pi\cdot (u^3)^3~\mathrm{d}x\\
=&2 \ \underset{i,j=1,2,3}{\sum}\ \int \Delta^{-1}\partial_{i}\partial_{j}\left(\partial_{3}u^{i}u^{j}\right)(u^3)^3~\mathrm{d}x \\
=&2\ \underset{i=1,2,3 }{\sum} \ \int \Delta^{-1}\partial_{i}\partial_{3}\left(\partial_{3}u^{i}u^{3}\right)(u^3)^3~\mathrm{d}x \\
&+2\ \underset{\substack{i=1,2,3\\h=1,2} }{\sum}\ \int \Delta^{-1}\partial_{i}\partial_{h}\left(\partial_{3}u^{i}u^{h}\right)(u^3)^3~\mathrm{d}x \\
=&:J_{1}+J_{2}.
\end{split}
\end{equation}
Applying H\"{o}lder, interpolation and Cauchy--Schwarz inequalities, we have
\begin{equation}\label{J_{1}}
\begin{split}
J_{1}\leq&C \left\|\partial_{3}\bm{u}\cdot u^3\right\|_{\frac{4q}{q+3}}\left\|(u^3)^3\right\|_{\frac{4q}{3(q-1)}}\\
\leq&C \left\|\partial_{3}\bm{u}\right\|_{q}\left\|(u^3)^2\right\|_{\frac{2q}{q-1}}^{2}\\
\leq& C \left\|\partial_{3} \bm{u}\right\|_{q}^{p}\left\|(u^3)^2\right\|_{2}^{2}+\frac{1}{16}\left\|\nabla(u^3)^2\right\|_{2}^{2},
\end{split}
\end{equation}
which is similar to \eqref{I_{3}}.

For $\frac{3}{2}<q<2$, we pick $s =\frac{4q}{5q+6}$, and  $\kappa=\frac{5(3-q)}{7q-3},\frac{1}{b}=\frac{1}{2}-s,\frac{1}{a}=\frac{20s+15\kappa-5}{12\kappa+20},\theta=\frac{3\kappa+15-10s}{6\kappa+10}$.
By Lemmas \ref{lemma2.1}, \ref{lemma2.2} and \ref{lemmma2.3}, we get
\begin{equation}\label{J_{2}1}
\begin{split}
J_{2}\leq& C \left\|\partial_{3}\bm{u}\right\|_{a}\left\|u^h\right\|_{b}\left\|(u^3)^2\right\|_{2}^{\frac{3-3\kappa}{2}}\left\|u^3\right\|_{5a}^{3\kappa}\\
\leq&C \left\|\partial_{3}\bm{u}\right\|_{a} \left\|\partial_{3}u^h\right\|_{q}^{s}\left\|\Delta u^{h}\right\|_{2}^{1-s}\left\|(u^3)^2\right\|_{2}^{\frac{3-3\kappa}{2}}\left\|\partial_{3}u^3\right\|_{a}^{\frac{3\kappa}{5}}\left\|\nabla_{h}(u^3)^2\right\|_{2}^{\frac{6\kappa}{5}}\\
\leq &C \left\|\partial_{3}\bm{u}\right\|_{q}^{(1+\frac{3\kappa}{5})\theta+s}\left\|\nabla\partial_{3}\bm{u}\right\|_{2}^{(1+\frac{3\kappa}{5})(1-\theta)}\left(\left\|\nabla\omega^3\right\|_{2}+\left\|\nabla\partial_{3}\bm{u}\right\|_{2}\right)^{1-s}\\
&\times\left\|(u^3)^2\right\|_{2}^{\frac{3-3\kappa}{2}}\left\|\nabla_{h}(u^3)^2\right\|_{2}^{\frac{6\kappa}{5}}\\
\leq&C\left\|\partial_{3}\bm{u}\right\|_{q}^{p}\left\|(u^3)^2\right\|_{2}^2+\frac{1}{16}\left(\left\|\nabla\omega^3\right\|_{2}^2+\left\|\nabla(u^3)^2\right\|_{2}^2+\left\|\nabla\partial_{3}\bm{u}\right\|_{2}^{2}\right).
\end{split}
\end{equation}

Summing up \eqref{2}, \eqref{J_{1}} and \eqref{J_{2}1}, we obtain
\begin{equation}\label{J}
\begin{split}
\frac{1}{4} \frac{\mathrm{d}}{\mathrm{d}t}\left\|(u^3)^2 \right\|_{2}^{2}+\frac{3}{4}\left\|\nabla (u^3)^2\right\|_{2}^{2}\leq& C\left\|\partial_{3}\bm{u}\right\|_{q}^{p}\left(\left\|\omega^3\right\|_{2}^2+\left\|(u^3)^2\right\|_{2}^2+\left\|\partial_{3}\bm{u}\right\|_{2}^{2}\right)\\
&+\frac{1}{8}\left(\left\|\nabla\omega^3\right\|_{2}^2+\left\|\nabla(u^3)^2\right\|_{2}^2+\left\|\nabla\partial_{3}\bm{u}\right\|_{2}^{2}\right).
\end{split}
\end{equation}

\subsection{Estimate of $\partial_{3}u$}
Taking inner product of the equation \eqref{NS} with $-\partial_{3}^{2}\bm{u}$, we obtain
\begin{equation}\label{3}
\begin{split}
\frac{1}{2}\frac{\mathrm{d}}{\mathrm{d}t}\left\|\partial_{3}\bm{u}\right\|_{2}^{2}+\left\|\nabla \partial_{3}\bm{u}\right\|_{2}^{2}=&-\int \partial_{3}\bm{u}\cdot\nabla \bm{u}\cdot\partial_{3}\bm{u}~\mathrm{d}x \\
=&-\underset{h=1,2 }{\sum}\ \int \partial_{3}\bm{u}\cdot\nabla u^{h}\cdot\partial_{3}u^{h}~\mathrm{d}x\\
&-\int\partial_{3}\bm{u}\cdot\nabla u^{3}\cdot\partial_{3}u^{3}~\mathrm{d}x\\
=&:K_{1}+K_{2}.
\end{split}
\end{equation}
By H\"{o}lder inequality and Lemma \ref{lemma2.1}, we have
\begin{equation}\label{K_{1}}
\begin{split}
K_{1}&\leq C  \left \|\partial_{3}\bm{u}\right\|_{q}\left\|\nabla u^{h}\right\|_{\frac{2q}{q-1}}\left\|\partial_{3}\bm{u}\right\|_{\frac{2q}{q-1}} \\
&\leq C \left\|\partial_{3}\bm{u}\right\|_{q}\left(\left\|\omega^3\right\|_{\frac{2q}{q-1}}+\left\|\partial_{3}\bm{u}\right\|_{\frac{2q}{q-1}}\right)
\left\|\partial_{3}\bm{u}\right\|_{\frac{2q}{q-1}} \\
&\leq C\left\|\partial_{3}\bm{u}\right\|_{q}^{p}\left(\left\|\omega^3\right\|_{2}^2+\left\|\partial_{3}\bm{u}\right\|_{2}^{2}\right)+\frac{1}{16}\left(\left\|\nabla\omega^3\right\|_{2}^2+\left\|\nabla\partial_{3}\bm{u}\right\|_{2}^{2}\right),
\end{split}
\end{equation}
which is similar to \eqref{I_{3}}. And by H\"{o}lder, interpolation and Cauchy--Schwarz inequalities, we obtain
\begin{equation}\label{K_{2}}
\begin{split}
K_{2}&=\int~\partial_{3}\bm{u}\cdot u^{3} \cdot\nabla\partial_{3}u^{3}\mathrm{d}x \\
&\leq \left\|\partial_{3}\bm{u}\right\|_{\frac{12q}{q+6}}\left\|u^{3}\right\|_{\frac{12q}{5q-6}}\left\|\nabla\partial_{3}u^{3}\right\|_{2} \\
&\leq C\left\|\partial_{3}\bm{u}\right\|_{q}^{p}\left\|(u^{3})^2\right\|_{2}^{2}+\frac{1}{16}\left(\left\|\nabla(u^{3})^2\right\|_{2}^{2}+\left\|\nabla\partial_{3}\bm{u}\right\|_{2}^{2}\right),
\end{split}
\end{equation}
which is similar to \eqref{I_{1,2}}.

Summing up \eqref{3}, \eqref{K_{1}}, and \eqref{K_{2}}, we get
\begin{equation}\label{K}
\begin{split}
\frac{1}{2}\frac{\mathrm{d}}{\mathrm{d}t}\left\|\partial_{3}\bm{u}\right\|_{2}^{2}+\left\|\nabla \partial_{3}\bm{u}\right\|_{2}^{2}\leq& C \left\|\partial_{3} \bm{u}\right\|_{q}^{p}\left(\left\|\omega^{3}\right\|_{2}^{2}+\left\|(u^{3})^2\right\|_{2}^{2}+\left\|\partial_{3}\bm{u}\right\|_{2}^{2}\right)\\
&+\frac{1}{8}\left(\left\|\nabla \omega^{3}\right\|_{2}^{2}+\left\|\nabla(u^{3})^2\right\|_{2}^{2}+\left\|\nabla\partial_{3}\bm{u}\right\|_{2}^{2}\right).
\end{split}
\end{equation}

\section{Some \textit{a priori} estimates for $2\leq q \leq 6$}

\subsection{Estimate of $\omega^3$ and $\partial_{3} u$}

Recall that
\begin{equation} \label{1'}
\frac{1}{2}\frac{\mathrm{d}}{\mathrm{d}t}\left(\left\|\omega^3\right\|_{2}^{2}+\left\|\partial_{3}\bm{u}\right\|_{2}^{2}\right)+\left\|\nabla \omega^3\right\|_{2}^{2}+\left\|\nabla\partial_{3}\bm{u}\right\|_{2}^{2}=:\ I_{1}+I_{2}+I_{3}+K_{1}+K_{2},
\end{equation}
in \eqref{1} and \eqref{3}.

$\bullet$ For $2\leq q <3$.

\begin{equation}\label{I'_{1,2}1}
\begin{split}
I_{1}+I_{2}+K_{2}\leq& 2 \left\|\partial_{3}\bm{u}\right\|_{q}^{\frac{2}{3}}\cdot\left\|\partial_{3}\bm{u}\right\|_{6}^{\frac{1}{3}} \left\|\left( u^{3}\right)^{\frac{3}{2}}\right\|_{\frac{3q}{2q-3}}^{\frac{2}{3}} \left(\left\|\nabla \omega^{3}\right\|_{2}+\left\|\nabla\partial_{3}\bm{u}\right\|_{2}\right) \\
\leq& C \left\|\partial_{3}\bm{u}\right\|_{q}^{\frac{2}{3}}\left\|\nabla\partial_{3}\bm{u}\right\|_{2}^{\frac{1}{3}} \left\|\left( u^{3}\right)^{\frac{3}{2}}\right\|_{3}^{\frac{5q-9}{3q}}\left\|\left( u^{3}\right)^{\frac{3}{2}}\right\|_{9}^{\frac{3-q}{q}}\\
&\times\left(\left\|\nabla \omega^{3}\right\|_{2}+\left\|\nabla\partial_{3}\bm{u}\right\|_{2}\right).
\end{split}
\end{equation}

Summing up \eqref{I_{3}}, \eqref{K_{1}}, \eqref{1'} and \eqref{I'_{1,2}1}, we have
\begin{equation} \label{I'1}
\begin{split}
&\left\|\omega^3\right\|_{L^{\infty,2}_{T_{1},T^*}}^{2}+\left\|\partial_{3}\bm{u}\right\|_{L^{\infty,2}_{T_{1},T^*}}^{2}+2\left\|\nabla \omega^3\right\|_{L^{2,2}_{T_{1},T^*}}^{2}+2\left\|\nabla \partial_{3}\bm{u}\right\|_{L^{2,2}_{T_{1},T^*}}^{2}\\
\leq & 2\left\|\omega^3(T_{1})\right\|_{2}^{2}+2\left\|\partial_{3}\bm{u}(T_{1})\right\|_{2}^{2}+C \left\|\partial_{3}\bm{u}\right\|_{L^{p,q}_{T_{1},T^*}}^{\frac{2}{3}}\left\|\nabla\partial_{3}\bm{u}\right\|_{L^{2,2}_{T_{1},T^*}}^{\frac{1}{3}} \left\|\left( u^{3}\right)^{\frac{3}{2}}\right\|_{L^{\infty,3}_{T_{1},T^*}}^{\frac{5q-9}{3q}}\\
&\times\left\|\left( u^{3}\right)^{\frac{3}{2}}\right\|_{L^{3,9}_{T_{1},T^*}}^{\frac{3-q}{q}}\left(\left\|\nabla \omega^{3}\right\|_{L^{2,2}_{T_{1},T^*}}+\left\|\nabla \partial_{3}\bm{u}\right\|_{L^{2,2}_{T_{1},T^*}}\right)+C \left\|\partial_{3} \bm{u}\right\|_{L^{p,q}_{T_{1},T^*}}^{p}\\
&\times\left(\left\|\omega^{3}\right\|_{L^{\infty,2}_{T_{1},T^*}}^{2}+\left\|\partial_{3}\bm{u}\right\|_{L^{\infty,2}_{T_{1},T^*}}^{2}\right)+\frac{1}{2}\left(\left\|\nabla \omega^{3}\right\|_{L^{2,2}_{T_{1},T^*}}^2+\left\|\nabla \partial_{3}\bm{u}\right\|_{L^{2,2}_{T_{1},T^*}}^2\right) \\
\leq& 2\left\|\omega^3(T_{1})\right\|_{2}^{2}+2\left\|\partial_{3}\bm{u}(T_{1})\right\|_{2}^{2}+ C \left(\left\|\partial_{3} \bm{u}\right\|_{L^{p,q}_{T_{1},T^*}}^{p}+\left\|\partial_{3} \bm{u}\right\|_{L^{p,q}_{T_{1},T^*}}^{\frac{4q}{5q-9}}\right)\\
&\cdot\left(\left\|\omega^{3}\right\|_{L^{\infty,2}_{T_{1},T^*}}^{2}+\left\|\left( u^{3}\right)^{\frac{3}{2}}\right\|_{L^{\infty,3}_{T_{1},T^*}}^2+\left\|\partial_{3}\bm{u}\right\|_{L^{\infty,2}_{T_{1},T^*}}^{2}\right)+\left\|\nabla\omega^{3}\right\|_{L^{2,2}_{T_{1},T^*}}^{2}\\
&+\left\|\left( u^{3}\right)^{\frac{3}{2}}\right\|_{L^{3,9}_{T_{1},T^*}}^2+\left\|\nabla\partial_{3}\bm{u}\right\|_{L^{2,2}_{T_{1},T^*}}^2 .
\end{split}
\end{equation}

$\bullet$ For $3\leq q \leq 6$.

According to H\"{o}lder, interpolation and Cauchy--Schwarz inequalities, we get
\begin{equation}\label{I'_{1,2}2}
\begin{split}
I_{1}+I_{2}+K_{2}&\leq 2 \left\|\partial_{3}\bm{u}\right\|_{q}^{\frac{2}{3}}\cdot\left\|\partial_{3}\bm{u}\right\|_{\frac{6q}{5q-12}}^{\frac{1}{3}} \left\|\left( u^{3}\right)^{\frac{3}{2}}\right\|_{3}^{\frac{2}{3}}\left(\left\|\nabla \omega^{3}\right\|_{2}+\left\|\nabla\partial_{3}\bm{u}\right\|_{2}\right) \\
&\leq C \left\|\partial_{3}\bm{u}\right\|_{q}^{\frac{2}{3}}\cdot\left\|\partial_{3}\bm{u}\right\|_{2}^{\frac{2}{3}-\frac{2}{q}}\left\|\nabla\partial_{3}\bm{u}\right\|_{2}^{\frac{2}{q}-\frac{1}{3}} \left\|\left( u^{3}\right)^{\frac{3}{2}}\right\|_{3}^{\frac{2}{3}}\left(\left\|\nabla \omega^{3}\right\|_{2}+\left\|\nabla\partial_{3}\bm{u}\right\|_{2}\right) \\
&\leq C\left\|\partial_{3}\bm{u}\right\|_{q}^{p}\left(\left\|(u^{3})^{\frac{3}{2}}\right\|_{3}^{2}+\left\|\partial_{3}\bm{u}\right\|_{2}^2\right)+\frac{1}{16}\left(\left\|\nabla \omega^{3}\right\|_{2}^{2}+\left\|\nabla\partial_{3}\bm{u}\right\|_{2}^{2}\right).
\end{split}
\end{equation}
Summing up \eqref{I_{3}}, \eqref{K_{1}}, \eqref{1'} and \eqref{I'_{1,2}2},  we obtain
\begin{align}\label{I'2}
\begin{split}
&\frac{1}{2}\frac{\mathrm{d}}{\mathrm{d}t}\left(\left\|\omega^3\right\|_{2}^{2}+\left\|\partial_{3}\bm{u}\right\|_{2}^{2}\right)+\left\|\nabla \omega^3\right\|_{2}^{2}+\left\|\nabla \partial_{3}\bm{u}\right\|_{2}^{2}\\
&\leq C \left\|\partial_{3} \bm{u}\right\|_{q}^{p}\left(\left\|\omega^{3}\right\|_{2}^{2}+\left\|(u^{3})^{\frac{3}{2}}\right\|_{3}^{2}+\left\|\partial_{3}\bm{u}\right\|_{2}^2\right)\\
&+\frac{1}{8}\left(\left\|\nabla \omega^{3}\right\|_{2}^{2}+\left\|\nabla\partial_{3}\bm{u}\right\|_{2}^{2}\right).
\end{split}
\end{align}

\subsection{Estimate of $u^3$}

Taking inner product of the equation \eqref{u^3} with $|u^3|^{\frac{5}{2}}u^3$, we obtain
\begin{equation}\label{2'}
\begin{split}
\frac{2}{9}\frac{\mathrm{d}}{\mathrm{d}t}\left\||u^3|^{\frac{9}{4}}\right\|_{2}^{2}+\frac{56}{81}\left\|\nabla |u^3|^{\frac{9}{4}}\right\|_{2}^{2}=&-\int\partial_{3}\Pi\cdot |u^3|^{\frac{5}{2}}u^3~\mathrm{d}x\\
=&2 \ \underset{i,j=1,2,3}{\sum}\ \int \Delta^{-1}\partial_{i}\partial_{j}\left(\partial_{3}u^{i}u^{j}\right)|u^3|^{\frac{5}{2}}u^3~\mathrm{d}x \\
=&2\ \underset{i=1,2,3 }{\sum} \ \int \Delta^{-1}\partial_{i}\partial_{3}\left(\partial_{3}u^{i}u^{3}\right)|u^3|^{\frac{5}{2}}u^3~\mathrm{d}x \\
&+2\ \underset{\substack{i=1,2,3\\h=1,2} }{\sum}\ \int \Delta^{-1}\partial_{i}\partial_{h}\left(\partial_{3}u^{i}u^{h}\right)|u^3|^{\frac{5}{2}}u^3~\mathrm{d}x \\
=&:J_{1}+J_{2}.
\end{split}
\end{equation}
Applying H\"{o}lder, interpolation and Cauchy--Schwarz inequalities, we have
\begin{equation}\label{J'_{1}}
\begin{split}
J_{1}\leq&C \left\|\partial_{3}\bm{u}\cdot u^3\right\|_{\frac{9q}{2q+7}}\left\||u^3|^{\frac{5}{2}}u^3\right\|_{\frac{9q}{7q-7}}\\
\leq&C \left\|\partial_{3}\bm{u}\right\|_{q}\left\||u^3|^{\frac{9}{4}}\right\|_{\frac{2q}{q-1}}^{2}\\
\leq& C \left\|\partial_{3} \bm{u}\right\|_{q}^{p}\left\||u^3|^{\frac{9}{4}}\right\|_{2}^{2}+\frac{1}{16}\left\|\nabla|u^3|^{\frac{9}{4}}\right\|_{2}^{2}.
\end{split}
\end{equation}

By Lemma \ref{lemma2.4}, we obtain
\begin{equation}\label{J'_{2}}
\begin{split}
J_{2}\leq& C \left\|\partial_{3}\bm{u}\right\|_{q}\left\|u^h\right\|_{3q}\left\||u^3|^{\frac{9}{4}}\right\|_{\frac{14q}{9q-12}}^{\frac{14}{9}}\\
\leq& C \left\|\partial_{3}\bm{u}\right\|_{q}^{\frac{4}{3}}\left\|\nabla u^h\right\|_{2}^{\frac{2}{3}}\left\||u^3|^{\frac{9}{4}}\right\|_{2}^{\frac{4(5q-9)}{9q}}\left\|\nabla|u^3|^{\frac{9}{4}}\right\|_{2}^{\frac{2(6-q)}{3q}}\\
\leq&C\left\|\partial_{3}\bm{u}\right\|_{q}^{p}\left\|\nabla u^h\right\|_{2}^{\frac{q}{2q-3}}\left\||u^3|^{\frac{9}{4}}\right\|_{2}^{\frac{2(5q-9)}{6q-9}}+\frac{1}{16}\left\|\nabla|u^3|^{\frac{9}{4}}\right\|_{2}^{2}.
\end{split}
\end{equation}

Summing up \eqref{2'}, \eqref{J'_{1}} and \eqref{J'_{2}}, we get
\begin{equation}
\frac{\mathrm{d}}{\mathrm{d}t}\left\||u^3|^{\frac{9}{4}}\right\|_{2}^{2}+\left\|\nabla |u^3|^{\frac{9}{4}}\right\|_{2}^{2}\leq C\left\|\partial_{3}\bm{u}\right\|_{q}^{p} \left(\left\|\nabla u^h\right\|_{2}^{\frac{q}{2q-3}}\left\||u^3|^{\frac{9}{4}}\right\|_{2}^{\frac{2(5q-9)}{6q-9}}+\left\||u^3|^{\frac{9}{4}}\right\|_{2}^{2}\right).
\end{equation}

$\bullet$ For $2\leq q <3$.

By Lemma \ref{lemma2.1}, we deduce that

\begin{equation}\label{J'1}
\begin{split}
&\left\||u^3|^{\frac{3}{2}}\right\|_{L^{\infty,3}_{T_{1},T^*}}^{2}+
\left\||u^3|^{\frac{3}{2}}\right\|_{L^{3,9}_{T_{1},T^*}}^{2}\\
\leq&2\left\||u^3(T_{1})|^{\frac{3}{2}}\right\|_{3}^{2}+ C\left\|\partial_{3}\bm{u}\right\|_{L^{p,q}_{T_{1},T^*}}^{\frac{2p}{3}} \left(\left\|\nabla u^h\right\|_{L^{\infty,2}_{T_{1},T^*}}^{\frac{2q}{6q-9}}\left\||u^3|^{\frac{3}{2}}\right\|_{L^{\infty,3}_{T_{1},T^*}}^{\frac{10q-18}{6q-9}}+\left\||u^3|^{\frac{3}{2}}\right\|_{L^{\infty,3}_{T_{1},T^*}}^{2}\right)\\
\leq&2\left\||u^3(T_{1})|^{\frac{3}{2}}\right\|_{3}^{2}+ C\left\|\partial_{3}\bm{u}\right\|_{L^{p,q}_{T_{1},T^*}}^{\frac{2p}{3}} \left(\left\|\nabla u^h\right\|_{L^{\infty,2}_{T_{1},T^*}}^{2}+\left\||u^3|^{\frac{3}{2}}\right\|_{L^{\infty,3}_{T_{1},T^*}}^{2}\right) \\
\leq & 2\left\||u^3(T_{1})|^{\frac{3}{2}}\right\|_{3}^{2}+ C\left\|\partial_{3}\bm{u}\right\|_{L^{p,q}_{T_{1},T^*}}^{\frac{2p}{3}} \left(\left\|\partial_{3}\bm{u}\right\|_{L^{\infty,2}_{T_{1},T^*}}^{2}+\left\|\omega^3\right\|_{L^{\infty,2}_{T_{1},T^*}}^{2}+\left\||u^3|^{\frac{3}{2}}\right\|_{L^{\infty,3}_{T_{1},T^*}}^{2}\right).
\end{split}
\end{equation}

$\bullet$ For $3\leq q \leq 6$.

By Lemma \ref{lemma2.1}, we deduce that

\begin{equation}\label{J'2}
\begin{split}
\frac{\mathrm{d}}{\mathrm{d}t}\left\||u^3|^{\frac{3}{2}}\right\|_{3}^{2}\leq& C\left\|\partial_{3}\bm{u}\right\|_{q}^{p} \left(\left\|\nabla u^h\right\|_{2}^{\frac{q}{2q-3}}\left\||u^3|^{\frac{3}{2}}\right\|_{3}^{\frac{3q-6}{2q-3}}+\left\||u^3|^{\frac{3}{2}}\right\|_{3}^{2}\right)\\
\leq& C\left\|\partial_{3}\bm{u}\right\|_{q}^{p} \left(\left\|\nabla u^h\right\|_{2}^{2}+\left\||u^3|^{\frac{3}{2}}\right\|_{3}^{2}\right) \\
\leq &  C\left\|\partial_{3}\bm{u}\right\|_{q}^{p} \left(\left\|\partial_{3}\bm{u}\right\|_{2}^{2}+\left\|\omega^3\right\|_{2}^{2}+\left\||u^3|^{\frac{3}{2}}\right\|_{3}^{2}\right).
\end{split}
\end{equation}

\section{Proof of Theorem \ref{thm}}
Assume that $\mathbf{u}\in C([0,T^*);H^1(\R^3))\cap L^2_{loc}([0,T^*);H^2(\R^3))$ be the unique solution of the Navier--Stokes equations, while $T^* \leq T$ is the maximal point.

There exists $T_{1}<T^*$, such that $\left\|\partial_{3} \bm{u}\right\|_{L^{p,q}_{T_{1},T^*}}\leq \epsilon<1$, while the constant $\epsilon$ is sufficient small.

Denote
\begin{align}
E(t)&=\left\|\omega^{3}\right\|_{2}^{2}+\left\|\partial_{3}\bm{u}\right\|_{2}^{2},\\
E_{1}(t)&=E(t)+\left\|(u^{3})^2\right\|_{2}^{2},\\
E_{2}(t)&=E(t)+\left\|(u^{3})^\frac{3}{2}\right\|_{3}^{2}.
\end{align}
\begin{proof}
	Now, we begin our proof.
	
	$\bullet$  $\partial_{3}\bm{u}$ satisfies \eqref{0} with $\frac{3}{2}<q<2$.
	
	Summing up \eqref{I}, \eqref{J}, \eqref{K} and then using Gronwall inequality, we have
	\begin{equation}\label{4}
	E(t)\leq E_{1}(t)\leq E_{1}(0)\ \exp^{C \int_{0}^{T}\left\|\partial_{3} \bm{u}\right\|_{q}^{p}~\mathrm{d}\tau}<+\infty,
	\end{equation}
	for $0\leq t< T^*$.
	
	$\bullet$  $\partial_{3}\bm{u}$ satisfies \eqref{0} with $2 \leq q < 3$.
	
	For the convenience of the readers, we give another proof for $2 \leq q < 3$, which is similar to \cite{Kukavica2007}.
	
	Summing up \eqref{I'1}, \eqref{J'1}, we obtain
	\begin{equation}
	\underset{T_{1}\leq t<T^*}{\sup} E_{2}(t)\leq 2E_{2}(T_{1})+C_{1}\epsilon \cdot\underset{T_{1}\leq t<T^*}{\sup} E_{2}(t).
	\end{equation}
	Pick $\epsilon$ sufficient small such that $C_{1}\epsilon<\frac{1}{2}$. Then we have
	
	\begin{equation}
	E(t)\leq E_{2}(t)\leq 4E_{2}(T_{1})<+\infty,
	\end{equation}
	for $T_{1}\leq t< T^*$.
	
	$\bullet$  $\partial_{3}\bm{u}$ satisfies \eqref{0} with $3 \leq q \leq 6$.
	
	Summing up \eqref{I'2}, \eqref{J'2} and then using Gronwall inequality, we get
	\begin{equation}\label{6}
	E(t)\leq E_{2}(t)\leq E_{2}(0)\ \exp^{C \int_{0}^{T}\left\|\partial_{3} \bm{u}\right\|_{q}^{p}~\mathrm{d}\tau}<+\infty,
	\end{equation}
	for $0\leq t< T^*$.
	
	Therefore, for $\frac{3}{2}<q \leq 6$, we obtain that $E(t)<\infty$,  $T_{1}\leq t< T^*$.
	
	Now we show the control of the terms $\|\nabla \bm{u}\|_{2}$. Applying the spatial derivative $\nabla$ to the Navier–Stokes equations \eqref{NS}, and then taking $L^2$ inner
	product of the resulting equations with $\nabla \bm{u}$, we obtain
	\begin{align*}
	&\frac{1}{2}\frac{\mathrm{d}}{\mathrm{d}t}\left\|\nabla\bm{u}\right\|_{2}^{2}+\left\|\nabla^2\bm{u}\right\|_{2}^{2}\\
	=&-\underset{\substack{i,j,k=1,2,3} }{\sum}\ \int\partial_{i}u^j \partial_{j}u^k \partial_{i}u^k~\mathrm{d}x\\
	=& -\underset{\substack{i,k=1,2,3} }{\sum}\ \int\partial_{i}u^3 \partial_{3}u^k \partial_{i}u^k~\mathrm{d}x
	-\underset{\substack{h=1,2\\i,k=1,2,3} }{\sum}\ \int\partial_{i}u^h \partial_{h}u^k \partial_{i}u^k~\mathrm{d}x\\
	\leq& C\left\|\partial_{3}\bm{u}\right\|_{2}\left\|\nabla \bm{u}\right\|_{3}\left\|\nabla \bm{u}\right\|_{6}+C\left\|\nabla u^h\right\|_{2}\left\|\nabla \bm{u}\right\|_{3}\left\|\nabla \bm{u}\right\|_{6}\\
	\leq&C \left(\left\|\partial_{3}\bm{u}\right\|_{2}+\left\|\omega^3\right\|_{2}\right)\left\|\nabla \bm{u}\right\|_{2}^{\frac{1}{2}}\left\|\nabla^2 \bm{u}\right\|_{2}^{\frac{3}{2}}\\
	\leq&C \left(\left\|\partial_{3}\bm{u}\right\|_{2}+\left\|\omega^3\right\|_{2}\right)^4\left\|\nabla \bm{u}\right\|_{2}^{2}+\frac{1}{2}\left\|\nabla^2 \bm{u}\right\|_{2}^{2}.
	\end{align*}
	Applying Gronwall inequality, we obtain that, for $T_{1}\leq t<T^*$,
	\begin{align*}
	\left\|\nabla\bm{u}(t)\right\|_{2}^{2}\leq \left\|\nabla\bm{u}(T_{1})\right\|_{2}^{2}\exp^{C\int_{T_{1}}^{T^*} E(\tau)^4~\mathrm{d}\tau}<+\infty.
	\end{align*}
	We obtain that $\mathbf{u}$ can be continued beyond $T^*$, which contradicts with the definition of $T^*$. Thus, $T^*>T$,
	which yields the results.
	
	$\bullet$ We assume the solution $\bm{u}$ is axisymmetric and $\partial_{3} u^3$ satisfies \eqref{0.1}. By 1D Hardy inequality (See e.g. Lemma 2.1 in \cite{Chen2017}), we deduce that
	$$
	\int_{0}^{\infty}(\frac{u^{r}}{r})^{q}~r \mathrm{d} r<C(q)^{q}\int_{0}^{\infty}(\frac{\partial_{r}(ru^r)}{r})^{q}~r \mathrm{d}r,
	$$
	and
	$$
	\left\|\frac{u^{r}}{r}\right\|_{L^{q}(\R^3)}<C(q)\left\|\partial_{3}u^3\right\|_{L^{q}(\R^3)},
	$$
	since $\partial_{3}u^3=-\frac{\partial_{r}(ru^r)}{r}$. Then $\mathbf{u}$ is regular, by Theorem 1.1 in \cite{Kubica2012}.
\end{proof}
\section*{Acknowledgments}
Hui Chen was supported by Zhejiang Province Science Fund for Youths [LQ19A010002]. Daoyuan Fang was supported by NSF of China [11671353]. Ting Zhang was in part supported by NSF of China [11771389, 11931010, 11621101], Zhejiang Provincial Natural Science Foundation of China [LR17A010001].

\begin{thebibliography}{21}
 	\expandafter\ifx\csname natexlab\endcsname\relax\def\natexlab#1{#1}\fi
 	\providecommand{\url}[1]{\texttt{#1}}
 	\providecommand{\href}[2]{#2}
 	\providecommand{\path}[1]{#1}
 	\providecommand{\DOIprefix}{doi:}
 	\providecommand{\ArXivprefix}{arXiv:}
 	\providecommand{\URLprefix}{URL: }
 	\providecommand{\Pubmedprefix}{pmid:}
 	\providecommand{\doi}[1]{\href{http://dx.doi.org/#1}{\path{#1}}}
 	\providecommand{\Pubmed}[1]{\href{pmid:#1}{\path{#1}}}
 	\providecommand{\bibinfo}[2]{#2}
 	\ifx\xfnm\relax \def\xfnm[#1]{\unskip,\space#1}\fi
 	\bibitem[{Cao(2010)}]{Cao2010}
 	\bibinfo{author}{C.~Cao},
 	\newblock \bibinfo{title}{Sufficient conditions for the regularity to the 3{D}
 		{N}avier--{S}tokes equations},
 	\newblock \bibinfo{journal}{Discrete Contin. Dyn. Syst.} \bibinfo{volume}{26}
 	(\bibinfo{year}{2010}) \bibinfo{pages}{1141--1151}. \URLprefix
 	\url{https://doi.org/10.3934/dcds.2010.26.1141}.
 	\DOIprefix\doi{10.3934/dcds.2010.26.1141}.
 	\bibitem[{Cao and Titi(2011)}]{Cao2011}
 	\bibinfo{author}{C.~Cao}, \bibinfo{author}{E.~S. Titi},
 	\newblock \bibinfo{title}{Global regularity criterion for the 3d navier--stokes
 		equations involving one entry of the velocity gradient tensor},
 	\newblock \bibinfo{journal}{Arch. Ration. Mech. Anal.} \bibinfo{volume}{202}
 	(\bibinfo{year}{2011}) \bibinfo{pages}{919--932}. \URLprefix
 	\url{https://doi.org/10.1007/s00205-011-0439-6}.
 	\DOIprefix\doi{10.1007/s00205-011-0439-6}.
 	\bibitem[{Chemin and Zhang(2016)}]{Chemin2016}
 	\bibinfo{author}{J.-Y. Chemin}, \bibinfo{author}{P.~Zhang},
 	\newblock \bibinfo{title}{On the critical one component regularity for 3--{D}
 		{N}avier--{S}tokes system},
 	\newblock \bibinfo{journal}{Ann. Sci. \'{E}c. Norm. Sup\'{e}r.}
 	\bibinfo{volume}{49} (\bibinfo{year}{2016}) \bibinfo{pages}{131--167}.
 	\URLprefix \url{https://doi.org/10.24033/asens.2278}.
 	\DOIprefix\doi{10.24033/asens.2278}.
 	\bibitem[{Chemin et~al.(2017)Chemin, Zhang, and Zhang}]{Chemin2017}
 	\bibinfo{author}{J.-Y. Chemin}, \bibinfo{author}{P.~Zhang},
 	\bibinfo{author}{Z.~Zhang},
 	\newblock \bibinfo{title}{On the critical one component regularity for 3--{D}
 		{N}avier--{S}tokes system: general case},
 	\newblock \bibinfo{journal}{Arch. Ration. Mech. Anal.} \bibinfo{volume}{224}
 	(\bibinfo{year}{2017}) \bibinfo{pages}{871--905}. \URLprefix
 	\url{https://doi.org/10.1007/s00205-017-1089-0}.
 	\DOIprefix\doi{10.1007/s00205-017-1089-0}.
 	\bibitem[{Chen et~al.(2017)Chen, Fang, and Zhang}]{Chen2017}
 	\bibinfo{author}{H.~Chen}, \bibinfo{author}{D.~Fang},
 	\bibinfo{author}{T.~Zhang},
 	\newblock \bibinfo{title}{Regularity of 3{D} axisymmetric {N}avier-{S}tokes
 		equations},
 	\newblock \bibinfo{journal}{Discrete Contin. Dyn. Syst.} \bibinfo{volume}{37}
 	(\bibinfo{year}{2017}) \bibinfo{pages}{1923--1939}. \URLprefix
 	\url{https://doi.org/10.3934/dcds.2017081}.
 	\DOIprefix\doi{10.3934/dcds.2017081}.
 	\bibitem[{Chen et~al.(2019)Chen, Fang, and Zhang}]{Chen2019}
 	\bibinfo{author}{H.~Chen}, \bibinfo{author}{D.~Fang},
 	\bibinfo{author}{T.~Zhang},
 	\newblock \bibinfo{title}{The global solutions of axisymmetric
 		{N}avier–{S}tokes equations with anisotropic initial data},
 	\newblock \bibinfo{journal}{Z. Angew. Math. Phys.} \bibinfo{volume}{70}
 	(\bibinfo{year}{2019}). \URLprefix
 	\url{https://doi.org/10.1007/s00033-019-1212-1}.
 	\DOIprefix\doi{10.1007/s00033-019-1212-1}.
 	\bibitem[{Escauriaza et~al.(2003)Escauriaza, Seregin, and
 		\v{S}ver\'{a}k}]{Escauriaza2003}
 	\bibinfo{author}{L.~Escauriaza}, \bibinfo{author}{G.~A. Seregin},
 	\bibinfo{author}{V.~\v{S}ver\'{a}k},
 	\newblock \bibinfo{title}{Backward uniqueness for parabolic equations},
 	\newblock \bibinfo{journal}{Arch. Ration. Mech. Anal.} \bibinfo{volume}{169}
 	(\bibinfo{year}{2003}) \bibinfo{pages}{147--157}. \URLprefix
 	\url{https://doi.org/10.1007/s00205-003-0263-8}.
 	\DOIprefix\doi{10.1007/s00205-003-0263-8}.
 	\bibitem[{Fang and Qian(2013)}]{Fang2013}
 	\bibinfo{author}{D.~Fang}, \bibinfo{author}{C.~Qian},
 	\newblock \bibinfo{title}{The regularity criterion for 3{D} {N}avier--{S}tokes
 		equations involving one velocity gradient component},
 	\newblock \bibinfo{journal}{Nonlinear Anal.} \bibinfo{volume}{78}
 	(\bibinfo{year}{2013}) \bibinfo{pages}{86--103}. \URLprefix
 	\url{https://doi.org/10.1016/j.na.2012.09.019}.
 	\DOIprefix\doi{10.1016/j.na.2012.09.019}.
 	\bibitem[{Han et~al.(2019)Han, Lei, Li, and Zhao}]{Han2019}
 	\bibinfo{author}{B.~Han}, \bibinfo{author}{Z.~Lei}, \bibinfo{author}{D.~Li},
 	\bibinfo{author}{N.~Zhao},
 	\newblock \bibinfo{title}{Sharp one component regularity for
 		{N}avier--{S}tokes},
 	\newblock \bibinfo{journal}{Arch. Ration. Mech. Anal.} \bibinfo{volume}{231}
 	(\bibinfo{year}{2019}) \bibinfo{pages}{939--970}. \URLprefix
 	\url{https://doi.org/10.1007/s00205-018-1292-7}.
 	\DOIprefix\doi{10.1007/s00205-018-1292-7}.
 	\bibitem[{Hopf(1951)}]{Hopf1951}
 	\bibinfo{author}{E.~Hopf},
 	\newblock \bibinfo{title}{\"{U}ber die {A}nfangswertaufgabe f\"{u}r die
 		hydrodynamischen {G}rundgleichungen. {E}rhard {S}chmidt zu seinem 75.
 		{G}eburtstag gewidmet},
 	\newblock \bibinfo{journal}{Math. Nachr.} \bibinfo{volume}{4}
 	(\bibinfo{year}{1951}) \bibinfo{pages}{213--231}. \URLprefix
 	\url{https://doi.org/10.1002/mana.3210040121}.
 	\DOIprefix\doi{10.1002/mana.3210040121}.
 	\bibitem[{Kubica et~al.(2012)Kubica, Pokorn\'{y}, and
 		Zajaczkowski}]{Kubica2012}
 	\bibinfo{author}{A.~Kubica}, \bibinfo{author}{M.~Pokorn\'{y}},
 	\bibinfo{author}{W.~M. Zajaczkowski},
 	\newblock \bibinfo{title}{Remarks on regularity criteria for axially symmetric
 		weak solutions to the {N}avier--{S}tokes equations},
 	\newblock \bibinfo{journal}{Math. Methods Appl. Sci.} \bibinfo{volume}{35}
 	(\bibinfo{year}{2012}) \bibinfo{pages}{360--371}. \URLprefix
 	\url{https://doi.org/10.1002/mma.1586}. \DOIprefix\doi{10.1002/mma.1586}.
 	\bibitem[{Kukavica and Ziane(2007)}]{Kukavica2007}
 	\bibinfo{author}{I.~Kukavica}, \bibinfo{author}{M.~Ziane},
 	\newblock \bibinfo{title}{{N}avier--{S}tokes equations with regularity in one
 		direction},
 	\newblock \bibinfo{journal}{J. Math. Phys.} \bibinfo{volume}{48}
 	(\bibinfo{year}{2007}). \URLprefix \url{https://doi.org/10.1063/1.2395919}.
 	\DOIprefix\doi{10.1063/1.2395919}.
 	\bibitem[{Leray(1934)}]{Leray1934}
 	\bibinfo{author}{J.~Leray},
 	\newblock \bibinfo{title}{Sur le mouvement d'un liquide visqueux emplissant
 		l'espace},
 	\newblock \bibinfo{journal}{Acta Math.} \bibinfo{volume}{63}
 	(\bibinfo{year}{1934}) \bibinfo{pages}{193--248}. \URLprefix
 	\url{https://doi.org/10.1007/BF02547354}. \DOIprefix\doi{10.1007/BF02547354}.
 	\bibitem[{Namlyeyeva and Skalak(2020)}]{Namlyeyeva2020}
 	\bibinfo{author}{Y.~Namlyeyeva}, \bibinfo{author}{Z.~Skalak},
 	\newblock \bibinfo{title}{The optimal regularity criterion for the
 		{N}avier--{S}tokes equations in terms of one directional derivative of the
 		velocity},
 	\newblock \bibinfo{journal}{Z. Angew. Math. Mech.} \bibinfo{volume}{100}
 	(\bibinfo{year}{2020}). \URLprefix
 	\url{https://doi.org/10.1002/zamm.201800114}.
 	\DOIprefix\doi{10.1002/zamm.201800114}.
 	\bibitem[{Pineau and Yu(2020)}]{Pineau2020}
 	\bibinfo{author}{B.~Pineau}, \bibinfo{author}{X.~Yu},
 	\newblock \bibinfo{title}{On prodi--serrin type conditions for the 3d
 		navier--stokes equations},
 	\newblock \bibinfo{journal}{Nonlinear Anal.} \bibinfo{volume}{190}
 	(\bibinfo{year}{2020}). \URLprefix
 	\url{https://doi.org/10.1016/j.na.2019.111612}.
 	\DOIprefix\doi{10.1016/j.na.2019.111612}.
 	\bibitem[{Prodi(1959)}]{Prodi1959}
 	\bibinfo{author}{G.~Prodi},
 	\newblock \bibinfo{title}{Un teorema di unicit\v{a} per le equazioni di
 		{N}avier--{S}tokes},
 	\newblock \bibinfo{journal}{Ann. Mat. Pura Appl.} \bibinfo{volume}{48}
 	(\bibinfo{year}{1959}) \bibinfo{pages}{173--182}. \URLprefix
 	\url{https://doi.org/10.1007/BF02410664}. \DOIprefix\doi{10.1007/bf02410664}.
 	\bibitem[{Qian(2016)}]{Qian2016}
 	\bibinfo{author}{C.~Qian},
 	\newblock \bibinfo{title}{A generalized regularity criterion for 3{D}
 		{N}avier--{S}tokes equations in terms of one velocity component},
 	\newblock \bibinfo{journal}{J. Differ. Equ.} \bibinfo{volume}{260}
 	(\bibinfo{year}{2016}) \bibinfo{pages}{3477--3494}. \URLprefix
 	\url{https://doi.org/10.1016/j.jde.2015.10.037}.
 	\DOIprefix\doi{10.1016/j.jde.2015.10.037}.
 	\bibitem[{Serrin(1962)}]{Serrin1962}
 	\bibinfo{author}{J.~Serrin},
 	\newblock \bibinfo{title}{On the interior regularity of weak solutions of the
 		{N}avier--{S}tokes equations},
 	\newblock \bibinfo{journal}{Arch. Ration. Mech. Anal.} \bibinfo{volume}{9}
 	(\bibinfo{year}{1962}) \bibinfo{pages}{187--195}. \URLprefix
 	\url{https://doi.org/10.1007/BF00253344}. \DOIprefix\doi{10.1007/BF00253344}.
 	\bibitem[{Takahashi(1990)}]{Takahashi1990}
 	\bibinfo{author}{S.~Takahashi},
 	\newblock \bibinfo{title}{On interior regularity criteria for weak solutions of
 		the {N}avier--{S}tokes equations},
 	\newblock \bibinfo{journal}{Manuscr. Math.} \bibinfo{volume}{69}
 	(\bibinfo{year}{1990}) \bibinfo{pages}{237--254}. \URLprefix
 	\url{https://doi.org/10.1007/Bf02567922}. \DOIprefix\doi{10.1007/Bf02567922}.
 	\bibitem[{Zhang(2018)}]{Zhang2018}
 	\bibinfo{author}{Z.~Zhang},
 	\newblock \bibinfo{title}{An improved regularity criterion for the
 		{N}avier--{S}tokes equations in terms of one directional derivative of the
 		velocity field},
 	\newblock \bibinfo{journal}{Bull. Math. Sci.} \bibinfo{volume}{8}
 	(\bibinfo{year}{2018}) \bibinfo{pages}{33--47}. \URLprefix
 	\url{https://doi.org/10.1007/s13373-016-0098-x}.
 	\DOIprefix\doi{10.1007/s13373-016-0098-x}.
 	\bibitem[{Zhang and Zhang(2014)}]{Zhang2014}
 	\bibinfo{author}{P.~Zhang}, \bibinfo{author}{T.~Zhang},
 	\newblock \bibinfo{title}{Global axisymmetric solutions to three-dimensional
 		{N}avier--{S}tokes system},
 	\newblock \bibinfo{journal}{Int. Math. Res. Not.} \bibinfo{volume}{2014}
 	(\bibinfo{year}{2014}) \bibinfo{pages}{610--642}. \URLprefix
 	\url{https://doi.org/10.1093/imrn/rns232}.
 	\DOIprefix\doi{10.1093/imrn/rns232}.
 	
 \end{thebibliography}
 
\end{document}